\documentclass[11pt,a4paper]{article}
\usepackage{hyperref}
\bibliographystyle{plain}
\usepackage[english]{babel}
\usepackage[utf8]{inputenc}
\usepackage[T1]{fontenc}
\usepackage{amsmath,amsthm,amssymb,amsfonts}
\numberwithin{equation}{section}
\usepackage{enumerate}
\usepackage{faktor}
\usepackage{dsfont}
\usepackage{scalerel,stackengine}
\usepackage{textcomp}
\usepackage{graphicx}
\usepackage{extarrows}
\usepackage{epigraph}
\usepackage{graphicx}
\usepackage{subcaption}
\usepackage{sidecap}
\usepackage{indentfirst}
\usepackage{tikz}
\usepackage{tikz-cd}
\usetikzlibrary{shapes.misc,arrows,matrix,patterns,decorations.markings,positioning}
\tikzset{cross/.style={cross out, draw=black, minimum size=2*(#1-\pgflinewidth), inner sep=0pt, outer sep=0pt},
cross/.default={4.5pt}}
\usepackage{pgfplots}
\usepackage{caption}
\usepackage{float}
\usepackage{color}
\usepackage{epstopdf}
\usepackage{epsfig}
\usepackage{stmaryrd}
\usepackage{color}
\usepackage{geometry}
\usepackage{multicol}
\usepackage{verbatim}

\raggedbottom
\tolerance=3000
\hbadness=10000
\hfuzz=1.5pt
\setlength{\oddsidemargin}{0.25in}
\setlength{\evensidemargin}{0.25in}
\setlength{\topmargin}{-0.25in}
\setlength{\textwidth}{6.75in}
\setlength{\textheight}{9.25in}
\setlength{\hoffset}{-0.5in}

\DeclareMathOperator{\Ker}{Ker }

\renewcommand{\geq}{\geqslant}
\renewcommand{\leq}{\leqslant} 
\renewcommand{\epsilon}{\varepsilon}

\newcommand{\Z}{\mathbb{Z}}

\newcommand{\F}{\mathbb{F}}

\newcommand{\C}{\mathbb C}

\newcommand{\norm}[1]{\left\Vert#1\right\Vert}

\newcommand{\xist}{\xi_{\text{st}}}

\DeclareFontFamily{U}{mathx}{\hyphenchar\font45}
\DeclareFontShape{U}{mathx}{m}{n}{
      <5> <6> <7> <8> <9> <10>
      <10.95> <12> <14.4> <17.28> <20.74> <24.88>
      mathx10
      }{}
\DeclareSymbolFont{mathx}{U}{mathx}{m}{n}
\DeclareFontSubstitution{U}{mathx}{m}{n}
\DeclareMathAccent{\widecheck}{0}{mathx}{"71}
\DeclareMathAccent{\wideparen}{0}{mathx}{"75}

\newtheorem{teo}{Theorem}[section]
\newtheorem*{teo*}{Theorem}
\newtheorem{lemma}[teo]{Lemma}
\newtheorem{prop}[teo]{Proposition}
\newtheorem*{prop*}{Proposition}

\newtheorem{cor}[teo]{Corollary}

\theoremstyle{definition}
\newtheorem{remark}[teo]{Remark}

\usepackage{xpatch}
\makeatletter
\xpatchcmd{\@thm}{\thm@headpunct{.}}{\thm@headpunct{}}{}{}
\makeatother

\pgfplotsset{compat=1.13}
\pdfminorversion=5

\begin{document}
\title{Detecting fibered strongly quasi-positive links}
\author{\scshape{Alberto Cavallo}\\ \\
 \footnotesize{Max Planck Institute for Mathematics,}\\
 \footnotesize{Bonn 53111, Germany}\\ \\ \small{cavallo@mpim-bonn.mpg.de}}
\date{}

\maketitle

\begin{abstract}
 We prove that an $n$-component fibered link $L$ in $S^3$ is strongly quasi-positive if and only if $\tau(L)=g_3(L)+n-1$, where $g_3(L)$ denotes the Seifert genus and $\tau$ is the Ozsv\'ath-Szab\'o concordance invariant. We also provide a table which contains a list of some fibered prime links with at most 9 crossings; and we explicitly determine the ones that are strongly quasi-positive and their maximal self-linking number.
\end{abstract}

\section{Introduction}
It is a very known result in low dimensional topology that every contact 3-manifold can be presented by an open book decomposition; this is a pair $(L,\pi)$ where $L$ is a smooth link in $M$ and $\pi:M\setminus L\rightarrow S^1$ is a locally trivial fibration whose fiber's closure is a compact surface $F$ with $\partial F=L$. 
Such an $L$ is then called a fibered link in $M$ and $F$ is a fibered surface for $L$.

In this paper we are only interested in the case where the ambient manifold $M$ is the 3-sphere. Fibered links in $S^3$ possess special properties; in fact, those links have unique fibered surfaces up to isotopy and, once such a surface is fixed, the fibration $\pi$ is also completely determined. It follows that in $S^3$ we can define fibered links as the links whose complement fibers over the circle. 

Since an open open book decomposition $(L,\pi)$ always determines a contact structure, we obtain a partition of fibered links in $S^3$ by saying that $L$ carries the structure $\xi_L$. In particular, it is a consequence of classical results of Harer and Stallings \cite{Harer,Stallings}, and the Giroux correspondence \cite{Giroux}, that the subset of fibered links, inducing the unique tight structure $\xist$ on $S^3$, coincides with the set of strongly quasi-positive links. We recall that a link is said strongly quasi-positive if it can be written as closure of the composition of $d$-braids of the form 
\[(\sigma_i\cdot\cdot\cdot\sigma_{j-2})\sigma_{j-1}(\sigma_i\cdot\cdot\cdot\sigma_{j-2})^{-1}\quad\text{ for some }\quad d\geq j\geq i+2\geq3\] or \[\sigma_i\quad\text{ for }\quad i=1,...,d-1\:,\]
where $\sigma_1,...,\sigma_{d-1}$ are the Artin generators of the $d$-braids group.
\begin{remark}
 Fibered and strongly quasi-positive links in $S^3$ are connected transverse $\C$-links in the sense of \cite{Cavallo3}.
\end{remark}
The main result of our paper is a criterion for detecting exactly which fibered links in $S^3$ are strongly quasi-positive, and then carry the contact structure $\xist$, generalizing results of Hedden \cite{Hedden} (for knots) and Boileau, Boyer and Gordon \cite{BBG} (for non-split alternating links).
We recall that Ozsv\'ath, Stipsicz and Szab\'o defined the $\tau$-set (of $2^{n-1}$ integers) of an $n$-component link in \cite{book}, using the link Floer homology group $cHFL^-$, and they denote with $\tau_{\max}$ and $\tau_{min}$ its maximum and minimum. Moreover, the $\tau$-set contains $\tau$, the concordance invariant defined in \cite{Cavallo} by the author.
\begin{teo}
 \label{teo:main}
 A fibered link $L$ in $S^3$ is strongly quasi-positive if and only if $\tau(L)=g_3(L)+n-1$ and, in this case, one also has $\tau(L)=\tau_{\max}(L)$.
\end{teo}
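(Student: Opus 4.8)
The plan is to prove the two implications separately, using that $L$ is fibered only for the ``if'' part.

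\emph{The ``only if'' direction and the $\tau_{\max}$ clause.} Suppose $L$ is strongly quasi-positive. Then $L$ bounds a quasi-positive Seifert surface $\Sigma$ (a Bennequin surface assembled from the defining braid words), and by Rudolph's theorem — a consequence of the slice--Bennequin inequality — such a $\Sigma$ is genus-minimizing, so $-\chi(\Sigma)=2g_3(L)+n-2$. Moreover $\Sigma$ realizes a transverse representative $T$ of $L$ with $\self(T)=-\chi(\Sigma)=2g_3(L)+n-2$, so the transverse adjunction bound for $\tau$ established in \cite{Cavallo}, namely $\self(T)\leq 2\tau(L)-n$, forces $\tau(L)\geq g_3(L)+n-1$. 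On the other hand one always has $\tau(L)\leq\tau_{\max}(L)\leq g_3(L)+n-1$: the first inequality because $\tau(L)$ is an element of the $\tau$-set, the second being the genus bound of \cite{book,Cavallo}. Hence $\tau(L)=\tau_{\max}(L)=g_3(L)+n-1$, which in particular settles the last sentence of the statement.

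\emph{The ``if'' direction.} Assume now that $L$ is fibered with $\tau(L)=g_3(L)+n-1$; let $(L,\pi)$ be the open book and $\xi_L$ the supported contact structure on $S^3$. By the results recalled in the Introduction (Harer, Stallings and Giroux) it suffices to show that $\xi_L$ is tight, and by the non-vanishing theorem of Ozsv\'ath and Szab\'o it is enough to prove that the contact invariant $c(\xi_L)\in\widehat{HF}(-S^3)\cong\Z$ is non-zero. So the whole content of this direction is the link version of Hedden's theorem \cite{Hedden},
\[\tau(L)=g_3(L)+n-1\ \Longrightarrow\ c(\xi_L)\neq 0.\]
I would prove it by running Hedden's argument in the multi-graded picture. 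Because $L$ is fibered, the link Floer homology of $L$ in its extremal Alexander multigrading has rank one (the multi-graded, equivalently sutured, refinement of the Ghiggini--Ni fibration-detection theorem, via Juh\'asz's and Ni's work on sutured Floer homology); let $x$ be a generator. The invariant $\tau$ of \cite{Cavallo} records precisely how far this extremal part survives into $\widehat{HF}(S^3)$, so the equality $\tau(L)=g_3(L)+n-1$ says exactly that $x$ has non-zero image under the map induced by the inclusion of the top filtration level. Finally, the link analogue of Plamenevskaya's observation — which is what Hedden used for knots — identifies $c(\xi_L)$ with that image, up to a unit and the mirror convention. Therefore $c(\xi_L)\neq 0$, so $\xi_L=\xist$ and $L$ is strongly quasi-positive.

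The entire difficulty sits in the previous paragraph. One has to: (i) fix the version of link Floer homology with its $U$-variables and the multi-Alexander filtration so that $\tau$ and the genus bound take the stated form with the $(n-1)$-shift; (ii) have available the fibration-detection statement in the multi-graded (sutured) form giving a rank-one extremal group; and (iii) carry over to links the contact-geometric input identifying $c(\xi_L)$ with the extremal link Floer generator — the subtle point being that the ambient manifold is still $S^3$, so the identification must be phrased relative to the binding $L$. Each of (i)--(iii) upgrades a statement that is known for knots, and the real care is in the multi-grading bookkeeping and the orientation/mirror conventions; by contrast, the inequalities in the ``only if'' direction are essentially formal once the two $\tau$-bounds are in place.
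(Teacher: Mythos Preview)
Your proposal is correct and follows essentially the same route as the paper. Both prove the ``only if'' direction via the slice--Bennequin/transverse bound together with the genus bound on $\tau_{\max}$ (the paper packages this as a citation to \cite[Theorem~1.4]{Cavallo3} rather than writing out Rudolph's argument), and both reduce the ``if'' direction to the non-vanishing of the contact class by identifying $c(\xi_L)$ with the extremal Alexander generator and reading the hypothesis $\tau(L)=g_3(L)+n-1$ as exactly the statement that this generator survives to $\widehat{HF}$.

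The only notable differences are in bookkeeping. The paper works throughout with the \emph{collapsed} Alexander filtration (not the multi-grading you invoke), and it phrases the survival condition on the mirror side: it passes from $\tau_{\max}(L)=g$ to $\tau_{\min}(L^*)=-g$ and then uses the description of $\tau_{\min}$ as the first level where the inclusion into $\widehat{HF}$ becomes non-trivial, together with the Ozsv\'ath--Szab\'o open-book Heegaard diagram (which represents $L^*$ and has $c(L)$ as the unique minimal-$A$ generator). Your ``top filtration level for $L$'' picture is the same statement seen from the other side of the mirror; just be aware that if you actually write it out you must either dualize or pass to $L^*$, since the standard contact-class construction lives in the Floer homology of the mirror. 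The identification you call ``the link analogue of Plamenevskaya's observation'' is supplied in the paper by Equation~\eqref{c} via \cite{Cavallo2}, so your point (iii) is already available and need not be redeveloped from scratch.
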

Combining Theorem \ref{teo:main} with \cite[Theorem 1.1]{EvHM} and \cite[Theorem 1.4]{Cavallo3} we easily obtain the following corollary.
\begin{cor}
 \label{cor:main}
 For a fibered link $L$ the fact that $\tau(L)=g_3(L)+n-1$ immediately implies $\text{SL}(L)=2\tau(L)-n$, where $\text{SL}(L)$ denotes the maximal self-linking number of $L$.
\end{cor}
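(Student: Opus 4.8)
The plan is to read off the equality $\text{SL}(L)=2\tau(L)-n$ as the collision of a lower bound coming from the contact–geometric side with an upper bound coming from the link Floer side, once Theorem \ref{teo:main} has been used to bridge the two.

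\emph{Step 1: identify the contact structure.} Assume $\tau(L)=g_3(L)+n-1$. By Theorem \ref{teo:main} the fibered link $L$ is strongly quasi-positive, so by the results of Harer and Stallings \cite{Harer,Stallings} together with the Giroux correspondence \cite{Giroux} — as recalled in the introduction — the open book $(L,\pi)$ supports the unique tight contact structure $\xist$ on $S^3$.

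\emph{Step 2: the lower bound.} Since $(L,\pi)$ supports $\xist$, \cite[Theorem 1.1]{EvHM} applies and yields a transverse representative of $L$, namely the binding of the open book pushed off along a page, whose self-linking number equals $-\chi(F)$ for $F$ the fiber surface; as $F$ has genus $g_3(L)$ and $n$ boundary components, this number is $2g_3(L)+n-2$. Hence $\text{SL}(L)\geq 2g_3(L)+n-2$, and substituting $g_3(L)=\tau(L)-n+1$ from the hypothesis gives $\text{SL}(L)\geq 2\tau(L)-n$.

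\emph{Step 3: the upper bound and conclusion.} On the other hand, \cite[Theorem 1.4]{Cavallo3} provides the slice–Bennequin-type inequality $\text{SL}(L)\leq 2\tau(L)-n$, valid for any link in $S^3$ (and applicable here for the relevant element of the $\tau$-set, since by Theorem \ref{teo:main} one has $\tau(L)=\tau_{\max}(L)$). Combining the inequalities of Steps 2 and 3 forces $\text{SL}(L)=2\tau(L)-n$, as claimed. The corollary carries no genuinely new content — it is a bookkeeping of already-established statements — and the only place that demands care is matching normalizations across the three inputs: the link-theoretic shift by $n$ (rather than the knot shift by $1$) in both the Bennequin bound and in the formula $-\chi(F)=2g_3(L)+n-2$ for the self-linking number of the binding, together with checking that \cite[Theorem 1.1]{EvHM} extends to the fibered-link setting with exactly that normalization. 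This is where I would spend the bulk of the write-up.
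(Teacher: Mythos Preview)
Your proposal is correct and follows exactly the approach the paper indicates: the paper's entire proof is the one-line remark ``Combining Theorem~\ref{teo:main} with \cite[Theorem 1.1]{EvHM} and \cite[Theorem 1.4]{Cavallo3} we easily obtain the following corollary,'' and your Steps 1--3 simply unpack that combination into the lower bound from \cite{EvHM} (via the binding realizing $-\chi(F)$) and the upper bound from the Bennequin-type inequality of \cite{Cavallo3}. Your caveat about normalizations and the link (versus knot) formulation of \cite{EvHM} is well-placed, but there is no divergence from the paper's argument.
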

Furthermore, we give a table where we list all prime alternating fibered links with crossing number at most 9, together with the prime non-alternating fibered ones with crossing number at most 7, and we denote whether they are strongly quasi-positive. We plan to expand the table in the future, but unfortunately at the moment we do not have knowledge of any program to compute $\tau$ for links.

The paper is organized as follows: in Section 2 we recall the basics properties of link Floer homology and fibered surfaces in the 3-sphere; moreover, we prove Theorem \ref{teo:main}. Finally, in Section 3 we present our table of small prime fibered links and we explain how to read it. 

\paragraph*{Funding:}
The author has a post-doctoral fellowship at the Max Planck Institute for Mathematics in Bonn.

\section{Proof of the main theorem}
\subsection{Link Floer homology}
In \cite{OSlinks} Ozsv\'ath and Szab\'o describe how to construct the chain complex $\left(\widehat{CFL}_*(\mathcal D),\widehat\partial\right)$ from a Heegaard diagram $\mathcal D=(\Sigma,\alpha,\beta,\textbf w,\textbf z)$, where the sets of basepoints both contain $n$ elements. They show that, under some conditions, the diagram $\mathcal D$ represents an $n$-component link $L$.

If we ignore the information given by $\textbf z$ then $\mathcal D$ is just a (multi-pointed) Heegaard diagram for $S^3$ and, using the same procedure, we obtain the complex $\left(\widehat{CF}_*(\mathcal D),\widehat\partial\right)$, see \cite{OS}. When $n=1$ the homology of the latter complex is denoted by $\widehat{HF}_*(S^3)\cong\F_0$, where $\F$ is the field with two elements. In general, we have that 
\[H_*\left(\widehat{CF}(\mathcal D)\right)\cong\widehat{HF}_*(S^3)\otimes\left(\F_{-1}\oplus\F_0\right)^{\otimes\:n-1}\:;\] hence, if $\mathcal D_1$ and $\mathcal D_2$ have the same number of basepoints then $\widehat{CF}_*(\mathcal D_1)$ is chain homotopy equivalent to $\widehat{CF}_*(\mathcal D_2)$.

The homology of $\widehat{CFL}_*(\mathcal D)$, following the notation of the author in \cite{Cavallo}, is denoted by $\widehat{\mathcal{HFL}}_*(L)$, but for what we said before this group is graded isomorphic to $H_*\left(\widehat{CF}(\mathcal D)\right)$. The difference, in the case of links, is given by the fact that the basepoints in $\textbf z$ define the Alexander (collapsed) increasing filtration $\mathcal A^s\widehat{CFL}_*(\mathcal D)$ with $s\in\Z$.
Such a filtration descends into homology in the following way: consider the quotient projection $\pi_d:\Ker\widehat\partial_d\rightarrow\widehat{\mathcal{HFL}}_d(L)$, where $\widehat\partial_d:\widehat{CFL}_d(\mathcal D)\rightarrow\widehat{CFL}_{d-1}(\mathcal D)$ is the restriciton of $\widehat\partial$ to $\widehat{CFL}_d(\mathcal D)$, and define \[\Ker\widehat\partial_{d,s}=\Ker\widehat\partial_d\cap\mathcal A^s\widehat{CFL}_d(\mathcal D)\:.\] 
Then we say that \[\mathcal A^s\widehat{\mathcal{HFL}}_d(L)=\pi_d(\Ker\widehat\partial_{d,s})\subset\widehat{\mathcal{HFL}}_d(L)\] for any $d\in\Z$. It is important to observe that $\mathcal A^s\widehat{\mathcal{HFL}}_*(L)$ is not the same homology group obtained by just taking the homology of the Alexander level $s$ of $\widehat{CFL}(\mathcal D)$, which is instead denoted with $H_*\left(\mathcal A^s\widehat{CFL}(\mathcal D)\right)$; more specifically, the latter group is obtained by first restricting $\widehat{CFL}_d(\mathcal D)$ and $\widehat\partial_d$ to the Alexander level $s$ and then extracting the homology. In particular, note that $\mathcal A^s\widehat{\mathcal{HFL}}_*(L)$ is, by definition, always a subgroup of $\widehat{\mathcal{HFL}}_*(L)$, while this needs not be true for $H_*\left(\mathcal A^s\widehat{CFL}(\mathcal D)\right)$.

In addition, we write $\widehat{HFL}_{*,*}(L)$ for the homology of the graded object associated to $\mathcal A\widehat{CFL}_*(\mathcal D)$, which is the bigraded complex $(\text{gr}_{*,*}(\mathcal D),\text{gr}(\widehat\partial))$ defined as follows, see \cite{book}: we have that $\text{gr}_{d,s}(\mathcal D)$ is the subspace of $\mathcal A^s\widehat{CFL}_d(\mathcal D)$ spanned by generators that are not in $\mathcal A^{s-1}\widehat{CFL}_d(\mathcal D)$, while $\text{gr}(\widehat\partial)$ is the component of $\widehat\partial$ which preserves the Alexander grading, so that
$\text{gr}_{d,s}(\widehat\partial):=\text{gr}(\widehat\partial)\lvert_{\text{gr}_{d,s}(\mathcal D)}:\text{gr}_{d,s}(\mathcal D)\rightarrow\text{gr}_{d-1,s}(\mathcal D)$.

We conclude this subsection by recalling that in \cite[Theorem 1.3]{Cavallo} the invariant $\tau_{\min}(L)$ was shown by the author to coincide with the minimal integer $s$ such that $\mathcal A^s\widehat{\mathcal{HFL}}_*(L)$ is non-zero.

\subsection{Fibered surfaces and the Thurston norm}
It is a result of Ghiggini \cite{Ghiggini} and Ni \cite{Ni} that link Floer homology detects fibered links in the 3-sphere.
\begin{teo}[Ghiggini-Ni]
 \label{teo:Ghiggini-Ni}
 A link $L\hookrightarrow S^3$ with $n$-components is fibered if and only if $\dim\widehat{HFL}_{*,s_{\text{top}}}(L)=1$, where $s_{\text{top}}$ is the maximal $s\in\Z$ such that the group $\widehat{HFL}_{*,s}(L)$ is non-zero.
\end{teo}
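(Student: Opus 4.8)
The plan is to recast the statement as a rank-detection problem in sutured Floer homology, where it reduces to the theorems of Ghiggini and Ni. First I would fix a minimal genus Seifert surface $F$ for $L$ and form the balanced sutured manifold $(M,\gamma)=S^3\setminus\nu(F)$, whose two sutured boundary pieces $R_{\pm}(\gamma)$ are copies of $F$. A minimal genus Seifert surface is incompressible, so $M$ is irreducible, and since link Floer homology detects the Thurston norm (Ni) the grading $s_{\text{top}}$ appearing in the statement is precisely the norm-minimizing one; hence $(M,\gamma)$ is taut. By Juh\'asz's identification of sutured Floer homology with link Floer homology — obtained by decomposing the sutured link complement of $L$ along $F$ — there is an isomorphism $SFH(M,\gamma)\cong\widehat{HFL}_{*,s_{\text{top}}}(L)$, so the hypothesis $\dim\widehat{HFL}_{*,s_{\text{top}}}(L)=1$ is equivalent to $SFH(M,\gamma)$ having rank one over $\F$.

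With this translation, the \emph{only if} direction is easy: if $L$ is fibered with fiber $F$ then $(M,\gamma)$ is the product sutured manifold $F\times[0,1]$, and a direct computation (or Juh\'asz's product formula) gives $\dim SFH(F\times[0,1])=1$, hence $\dim\widehat{HFL}_{*,s_{\text{top}}}(L)=1$. (Alternatively, this implication follows from the Ozsv\'ath--Szab\'o fiberedness estimates without any sutured machinery.) For the converse, assume $SFH(M,\gamma)$ has rank one and run a sutured manifold hierarchy for the taut manifold $(M,\gamma)$ in the sense of Gabai. By Juh\'asz's surface decomposition formula, cutting along a suitably chosen (``nice'', in his sense) decomposing surface splits $SFH$ as a direct sum in which the summand carried by the decomposed manifold is non-zero; starting from rank one, the first decomposition must therefore leave nothing behind, and — after arranging the decomposing surface to be (a parallel copy of) $F$ itself — one gets $(M,\gamma)\cong F\times[0,1]$. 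Unwinding what it means for the complement of a Seifert surface to be a product sutured manifold, the fibration of $M$ over $[0,1]$ closes up to a locally trivial fibration $S^3\setminus L\to S^1$ with fiber $F$, so $L$ is fibered.

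I expect the genuine difficulty to be concentrated in the converse, specifically in the step passing from ``$SFH$ has rank one'' to ``$(M,\gamma)$ is a product sutured manifold''. This requires both Gabai's existence of sutured hierarchies together with Juh\'asz's refinement that produces decomposing surfaces to which his decomposition formula can be applied, and a delicate case analysis excluding every way a non-product taut sutured manifold could still have minimal rank $SFH$; this is exactly the content of Ghiggini's genus-one argument \cite{Ghiggini} and Ni's treatment of the general case \cite{Ni}. A more routine but necessary point is to make the first step work uniformly in the number of components: one must normalize the collapsed Alexander grading so that $s_{\text{top}}$ corresponds to the Thurston norm of $F$, observe that the rank-one hypothesis already forces $L$ to be non-split so that a connected minimal genus Seifert surface exists, and check that its sutured complement is taut, so that the isomorphism $SFH(M,\gamma)\cong\widehat{HFL}_{*,s_{\text{top}}}(L)$ holds as stated.
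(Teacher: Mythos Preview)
The paper does not give its own proof of this theorem: it is stated as a result of Ghiggini and Ni, with references to \cite{Ghiggini} and \cite{Ni}, and is used as a black box. So there is no in-paper argument to compare your proposal against.

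That said, your outline is the standard route (via Juh\'asz's sutured reformulation) and is essentially correct, but one passage in your converse is garbled. Having already cut $S^3\setminus\nu(L)$ along $F$ to obtain $(M,\gamma)$, you do not then decompose $(M,\gamma)$ along ``a parallel copy of $F$ itself'' and conclude from a single step that nothing is left behind. The actual logic is: from $\dim SFH(M,\gamma)=1$ one invokes Ni's theorem (in Juh\'asz's sutured formulation) that a taut balanced sutured manifold with $SFH$ of rank one is a product; the sutured hierarchy and the decomposition formula are the \emph{ingredients} of that theorem, not a one-line argument you can run yourself. Your sentence ``the first decomposition must therefore leave nothing behind'' misstates the mechanism---rank one is preserved (not annihilated) along a well-chosen hierarchy, and the hard work is showing that this forces each piece, and ultimately $(M,\gamma)$, to be a product. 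Also note a small point in the setup: $S^3\setminus\nu(F)$ need not be irreducible just because $F$ is incompressible (think of a split link), so you should use the rank-one hypothesis to rule out splitness \emph{before} asserting tautness, as you in fact acknowledge at the end.
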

We recall that $\norm{L}_T$ denotes the evaluation of the Thurston semi-norm \cite{Thurston} at the homology class represented by the Seifert surfaces of $L$; and it is defined as
\[\norm{L}_T=o(L)-\max\left\{\chi(\Sigma)\right\}\:,\] where $\Sigma$ is a compact and oriented surface in $S^3$, such that $\partial\Sigma=L$, and $o(L)$ is the number of unknotted unknots in $L$.

Hence, it follows from \cite{Ninorm} that, for an $L$ as in Theorem \ref{teo:Ghiggini-Ni}, one has $2s_{\text{top}}=\norm{L}_T+n$, assuming that $L$ is not the unknot. Morever, we have the following result.
\begin{prop}
 \label{prop:fibered}
 If $L$ is a fibered $n$-component link in $S^3$ then $s_{\text{top}}=g_3(L)+n-1$. 
\end{prop}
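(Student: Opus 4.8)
The plan is to reduce the statement to the identity $2s_{\text{top}}=\norm{L}_T+n$ recalled just above, so that everything boils down to computing the Thurston norm of a fibered link in terms of its Seifert genus. First I would dispose of the unknot separately: there $n=1$, $g_3(L)=0$, and $\widehat{HFL}_{*,s}(L)$ is supported in Alexander grading $s=0$, so $s_{\text{top}}=0=g_3(L)+n-1$ and there is nothing to prove. From now on assume that $L$ is not the unknot, so that the identity $2s_{\text{top}}=\norm{L}_T+n$ coming from \cite{Ninorm} is available.

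Next I would pin down $\norm{L}_T$ using the fibered surface. Let $F$ be the fibered surface of $L$; it is connected (it is the closure of a fiber of a fibration $S^3\setminus L\to S^1$), so $\chi(F)=2-2g(F)-n$, where $g(F)$ is its genus. Two classical inputs enter here: (i) a fibered link other than the unknot is non-split — its complement is a surface bundle over $S^1$ with aspherical fiber, hence irreducible — so $o(L)=0$; and (ii) by Thurston's work the fibered surface is norm-minimizing, i.e.\ it attains $\max\{\chi(\Sigma)\}$ over all compact oriented $\Sigma$ with $\partial\Sigma=L$. From (ii) and the connectedness of $F$, comparing with a minimal genus Seifert surface $\Sigma_0$ one gets $2-2g(F)-n=\chi(F)\geq\chi(\Sigma_0)=2-2g_3(L)-n$, which forces $g(F)=g_3(L)$ and hence $\max\{\chi(\Sigma)\}=\chi(F)=2-2g_3(L)-n$. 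Combining with (i) yields
\[\norm{L}_T=o(L)-\max\{\chi(\Sigma)\}=-\chi(F)=2g_3(L)+n-2\:.\]

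Finally, substituting this into $2s_{\text{top}}=\norm{L}_T+n$ gives $2s_{\text{top}}=2g_3(L)+2n-2$, that is $s_{\text{top}}=g_3(L)+n-1$. The only genuinely non-formal step is the identification of $\chi(F)$ simultaneously with the Thurston norm and with the Seifert genus; this is where I expect the work to lie, and it rests on the single classical fact that a page of an open book decomposition of $S^3$ is a connected, genus-minimizing and norm-minimizing surface for its binding. Once that is granted (together with non-splitness, to kill the $o(L)$ term), the proof is pure bookkeeping with Euler characteristics.
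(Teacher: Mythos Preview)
Your proof is correct and follows essentially the same route as the paper's: both dispose of the unknot separately, invoke the identity $2s_{\text{top}}=\norm{L}_T+n$ from \cite{Ninorm}, and then use the classical fact that the page of an open book is connected and norm-minimizing to compute $\norm{L}_T=-\chi(F)$ and $g(F)=g_3(L)$. Your version is in fact slightly more careful, since you justify $o(L)=0$ via the irreducibility of the complement of a non-trivial fibered link, a point the paper's proof leaves implicit.
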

\begin{proof}
 The statement is clearly true for the unknot and it is a classical result, see \cite[Chapters 4 and 5]{Kawauchi}, that fibered surfaces are always connected and minimize the Thurston norm of $L$. Therefore, we have \[s_{\text{top}}=\dfrac{-\chi(F)+n}{2}=\dfrac{-(2-2g(F)-n)+n}{2}=g(F)+n-1\:,\] where $F$ is a fibered surface for $L$. 
 
 Now, if $L$ admits a Seifert surface $T$, with $g(T)<g(F)$, then this contradicts the fact that $F$ minimizes $\norm{L}_T$ and the claim follows. 
\end{proof}

\subsection{The contact invariant \texorpdfstring{$c(L)$}{c(L)}}
In order to prove Theorem \ref{teo:main}, we need to recall the original definition of the contact invariant $\widehat c(\xi)\in\widehat{HF}(S^3)$ from \cite{OSz} and adapt it to our setting.
In \cite{OSz} Ozsv\'ath and Szab\'o, for a given fibered knot $K\hookrightarrow S^3$, constructed a Heegaard diagram $\mathcal D_K$, representing the mirror image of the knot $K$, with a distinguished intersection point $c(K)$ such that \[A(c(K))=-g_3(K)\quad\text{ and }\quad A(y)>-g_3(K)\] for every other intersection point $y$ in $\mathcal D_K$. In particular, this gives that \[H_*\left(\mathcal A^{-g_3(K)}\widehat{CFK}(\mathcal D_{K})\right)\cong\F\] and \[H_*\left(\mathcal A^{s}\widehat{CFK}(\mathcal D_{K})\right)\cong\{0\}\quad\text{ for any }s<-g_3(K)\:.\]
This construction works exactly in the same way for an $n$-component fibered link $L$: only that now one has $A(c(L))=-g$ with $g\in\Z$ and $c(L)$ is still the unique minimal Alexander grading intersection point in $\mathcal D_L$.
\begin{lemma}
 \label{lemma:g}
 Suppose that $L,c(L)$ and $g$ are as before. Then we have that $g=g_3(L)+n-1$.
\end{lemma}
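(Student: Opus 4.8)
The plan is to extract, from the Ozsv\'ath--Szab\'o diagram $\mathcal D_L$ recalled above, the bottom non-zero Alexander level of the link Floer homology of the mirror $L^*$, and then transport this back to $L$ via the mirroring symmetry together with Proposition~\ref{prop:fibered}. The point is that $g$ is defined through a diagram for $L^*$, so everything one reads off directly concerns $L^*$ and must be reflected across.

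First I would note that, since $c(L)$ is the \emph{unique} intersection point of $\mathcal D_L$ of minimal Alexander grading $-g$, the subcomplex $\mathcal A^{-g}\widehat{CFL}(\mathcal D_L)$ is one-dimensional, generated by $c(L)$, and $\mathcal A^{s}\widehat{CFL}(\mathcal D_L)=\{0\}$ for every $s<-g$. At the very bottom filtration level the associated graded object coincides with this subcomplex, so $\widehat{HFL}_{*,-g}(L^*)\cong\F$ and $\widehat{HFL}_{*,s}(L^*)=\{0\}$ for $s<-g$; in particular $-g$ is exactly the minimal $s$ for which $\widehat{HFL}_{*,s}(L^*)$ is non-zero. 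It is here that the distinction stressed above between $H_*\left(\mathcal A^s\widehat{CFL}(\mathcal D_L)\right)$ and $\widehat{HFL}_{*,s}(L^*)$ has to be handled with care, but at the bottom level the two agree, so the passage is harmless.

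Next I would invoke the symmetry of link Floer homology under mirroring, $\widehat{HFL}_{*,s}(L^*)\cong\widehat{HFL}_{*,-s}(L)$ up to an irrelevant shift of Maslov grading, see \cite{OSlinks}. Together with the previous step this says that the maximal Alexander level at which $\widehat{HFL}_{*,s}(L)$ is non-zero, which is precisely $s_{\text{top}}$ in the notation of Theorem~\ref{teo:Ghiggini-Ni}, equals $g$. Finally, since $L$ is fibered, Proposition~\ref{prop:fibered} yields $s_{\text{top}}=g_3(L)+n-1$, and hence $g=g_3(L)+n-1$, which is the claim.

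The only genuinely delicate point is the first step: one must be certain that the uniqueness and minimality of $c(L)$ asserted by the Ozsv\'ath--Szab\'o construction really pin down the bottom of the \emph{associated graded} homology $\widehat{HFL}_{*,*}(L^*)$ and force $\widehat{HFL}_{*,-g}(L^*)$ to have rank one, rather than only controlling the honest homology of the filtered piece $\mathcal A^{-g}\widehat{CFL}(\mathcal D_L)$. As observed, this is automatic because $\mathcal A^{-g-1}\widehat{CFL}(\mathcal D_L)=\{0\}$; the remaining ingredients are direct citations of results already recalled.
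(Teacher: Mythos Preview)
Your argument is correct and is essentially the same as the paper's: you identify $-g$ as the bottom Alexander level of $\widehat{HFL}_{*,*}(L^*)$ by noting that $c(L)$ is the unique generator there (hence a nonzero cycle in the associated graded), then reflect across via the symmetry of link Floer homology and invoke Proposition~\ref{prop:fibered}. The only cosmetic remark is that the mirror relation on $\widehat{HFL}$ is a duality with negated Maslov grading rather than a mere shift, but since you only use it to compare Alexander supports this does not affect the argument.
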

\begin{proof}
 The element $c(L)$ is necessarily a cycle and $[c(L)]$ has to be non-zero in $\widehat{HFL}(L^*)$; hence, we have that $A(c(L))$ is the minimal integer $s$ such that $\widehat{HFL}_{*,s}(L^*)$ is non-zero.
 Because of the symmetries of $\widehat{HFL}(L)$, see \cite{OSz2}, and Proposition \ref{prop:fibered} this means that $A(c(L))=-g_3(L)-n+1=-g$.
\end{proof}
If we consider the inclusion $i^{L,-g}:\mathcal A^{-g}\widehat{CFL}(\mathcal D_L)\hookrightarrow\widehat{CF}(\mathcal D_L)$ then we can see $c(L)$ as a cycle in $\widehat{CF}(\mathcal D_L)$ by dropping the information about the Alexander grading.

The element $c(L)$ can then be identified with the contact element defined in \cite{Cavallo2}, using multi-pointed Legendrian Heegaard diagrams, generalizing the presentation of Honda, Kazez and Mati\'c, see \cite{HKM}. This means that 
\begin{equation}
\label{c}
[c(L)]=\widehat c(\xi_L)\otimes\textbf e_{1-n}\in \widehat{HF}(S^3)\otimes\left(\F_{-1}\oplus\F_0\right)^{\otimes\:n-1}\:,
\end{equation}
where $\textbf e_{1-n}$ is the unique generator of $(\F_{-1}\oplus\F_0)^{\otimes\:n-1}$ in grading $1-n$. In particular, we can state the following lemma.
\begin{lemma}
 \label{lemma:vanishing}
 Suppose that $L$ is a fibered link and $\xi_L$ is the contact structure on $S^3$ carried by $L$. Then $c(L)$ is zero in homology whenever $\xi_L$ is overtwisted.
\end{lemma}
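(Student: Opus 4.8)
The plan is to obtain the statement as an immediate formal consequence of the identification \eqref{c}, combined with the classical vanishing theorem for the contact invariant of an overtwisted contact structure.

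First I would recall that, by \eqref{c}, the class $[c(L)]$ sits inside $\widehat{HF}(S^3)\otimes\left(\F_{-1}\oplus\F_0\right)^{\otimes\:n-1}$ as $\widehat c(\xi_L)\otimes\textbf{e}_{1-n}$, where $\textbf{e}_{1-n}$ is a fixed non-zero homogeneous generator of the $(n-1)$-fold tensor power. Hence $\widehat c(\xi_L)=0$ already forces $[c(L)]=\widehat c(\xi_L)\otimes\textbf{e}_{1-n}=0$ (and, since $a\mapsto a\otimes\textbf{e}_{1-n}$ is injective, the converse holds too, but we only need this direction). So the lemma reduces to the assertion that the contact invariant $\widehat c(\xi_L)\in\widehat{HF}(S^3)$ vanishes whenever $\xi_L$ is overtwisted. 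The conceptual point hidden in this reduction is that, although $c(L)$ is defined using the particular Heegaard diagram $\mathcal D_L$ attached to the open book $(L,\pi)$, formula \eqref{c} --- which rests on the comparison in \cite{Cavallo2} with the Honda--Kazez--Mati\'c element, itself modeled on \cite{HKM} --- guarantees that $[c(L)]$ only sees the contact structure $\xi_L$, up to the harmless tensor factor $\textbf{e}_{1-n}$.

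The remaining assertion is precisely the Ozsv\'ath--Szab\'o vanishing theorem \cite{OSz}: if a contact $3$-manifold $(Y,\xi)$ is overtwisted then $\widehat c(\xi)=0$ in $\widehat{HF}(-Y)$. Applying it with $Y=S^3$, so that $-Y\cong S^3$, and $\xi=\xi_L$ completes the argument. In short, no contact-geometric input is needed beyond \cite{OSz}; the only thing one must be careful about is that the element $c(L)$ constructed here genuinely represents (a tensor factor of) the contact class of $\xi_L$, which is exactly what \eqref{c} says. Consequently I do not expect any serious obstacle here: this lemma is the place where the groundwork of the previous subsection --- the construction of $\mathcal D_L$, Lemma \ref{lemma:g}, and the identification \eqref{c} --- is simply cashed in.
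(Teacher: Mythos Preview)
Your argument is correct and follows exactly the same approach as the paper: invoke \eqref{c} to write $[c(L)]=\widehat c(\xi_L)\otimes\textbf{e}_{1-n}$, then apply the Ozsv\'ath--Szab\'o vanishing theorem from \cite{OSz} to conclude. The paper's proof is just the two-sentence version of what you wrote.
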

\begin{proof}
 From \cite{OSz} one has $\widehat c(\xi_L)=[0]$ when $\xi_L$ is overtwisted.
 Hence, the homology class $[c(L)]$ is equal to zero for Equation \eqref{c}.
\end{proof}
From the definition of $c(L)$ we observe that $c(L)$ is non-zero in homology if and only if $-g$ is the minimal $t$ such that the map 
\begin{equation}
    i_*^{L,t}:H_*\left(\mathcal A^t\widehat{CFL}(\mathcal D_L)\right)\longrightarrow\widehat{\mathcal{HFL}}_*(L^*)\cong\widehat{HF}(S^3)\otimes\left(\F_{-1}\oplus\F_0\right)^{\otimes\:n-1}
    \label{A}
\end{equation}
induced by the inclusion $i^{L,t}$ is non-trivial.
\begin{proof}[Proof of Theorem \ref{teo:main}]
 If $L$ is strongly quasi-positive then from \cite[Theorem 1.4]{Cavallo3} one has $\tau(L)=\tau_{\max}(L)=g_3(L)+n-1$. Suppose now that $\tau_{\max}(L)=g_3(L)+n-1=g$, see Lemma \ref{lemma:g}; we want to prove that $L$ is strongly quasi-positive.
 
 We have that 
 \begin{equation}
     -g=-\tau_{\max}(L)=\tau_{\min}(L^*)=\min_{s\in\Z}\left\{\mathcal A^s\widehat{\mathcal{HFL}}(L^*)\text{ is non-zero}\right\}=\min_{s\in\Z}\{i_*^{L,s}\text{ is non-trivial}\}
  \label{B}
 \end{equation}     
 from the definition of $\tau$-set given in \cite{Cavallo}. Since $L$ is fibered, the combination of Equations \eqref{A} and \eqref{B} tells us precisely that $c(L)$ is non-zero in homology and, applying Lemma \ref{lemma:vanishing}, that $\xi_L=\xist$. 
 
 At this point, we conclude by recalling that a fibered link $L$ carries the tight structure on $S^3$ if and only if is strongly quasi-positive, as we observed in the introduction.
\end{proof}

\section{Table of small prime fibered strongly quasi-positive links}
Using Theorem \ref{teo:main} and some computation we can detect exactly which prime links are fibered and, among these, which ones are strongly quasi-positive. Those data are enclosed in Table \ref{tab:small_links}.
We denote the links following the notation on Linkinfo database \cite{Linkinfo}. Therefore, the number inside brackets identifies the relative orientation of the link.
\begin{table}
\centering
\begin{tabular}{cc}
name & strongly QP (SL)\\
\hline\hline
L2a1\{0\} & Y (0)\\
L2a1\{1\} & Y (0)\\
L4a1\{1\} & Y (2)\\
L5a1\{0\} & N\\
L5a1\{1\} & N\\
L6a1\{0\} & N\\
L6a3\{0\} & Y (4)\\
L6a4\{0,0\} & N\\
L6a4\{1,0\} & N\\
L6a4\{0,1\} & N\\
L6a4\{1,1\} & N\\
L6a5\{1,0\} & N\\
L6a5\{0,1\} & N\\
L6a5\{1,1\} & N\\
L6n1\{0,0\} & N\\
L6n1\{1,0\} & N\\
L6n1\{0,1\} & Y (3)\\
L6n1\{1,1\} & N\\
L7a1\{0\} & N\\
L7a1\{1\} & N\\
L7a2\{1\} & N\\
L7a3\{0\} & N\\
L7a3\{1\} & N\\
L7a5\{1\} & N\\
L7a6\{0\} & N\\
L7a7\{0,0\} & N\\
L7a7\{1,0\} & N\\
L7a7\{0,1\} & N\\
L7n1\{0\} & Y (4)\\
L7n1\{1\} & N\\
L7n2\{0\} & N\\
L7n2\{1\} & N\\
L8a1\{0\} & N\\
L8a1\{1\} & N\\
L8a2\{0\} & N\\
L8a2\{1\} & N\\
L8a3\{1\} & N\\
L8a4\{0\} & N\\
L8a4\{1\} & N\\
L8a5\{0\} & N\\
L8a7\{1\} & N\\
L8a8\{0\} & N\\
L8a8\{1\} & N\\
L8a9\{0\} & N\\
L8a9\{1\} & N\\
\end{tabular}
\begin{tabular}{|cc}
name & strongly QP (SL) \\
\hline\hline
L8a10\{1\} & N\\
L8a14\{0\} & Y (6)\\
L8a15\{0,0\} & N\\
L8a16\{0,0\} & N\\
L8a16\{1,1\} & N\\
L8a17\{1,1\} & N\\
L8a18\{0,0\} & N\\
L8a18\{1,1\} & N\\
L8a19\{0,0\} & N\\
L8a19\{1,1\} & N\\
L8a20\{0,0\} & N\\
L8a20\{1,0\} & N\\
L8a21\{1,0,0\} & N\\
L8a21\{0,1,0\} & N\\
L8a21\{0,0,1\} & N\\
L8a21\{1,0,1\} & N\\
L8a21\{0,1,1\} & N\\
L8a21\{1,1,1\} & N\\
L9a2\{0\} & N\\
L9a2\{1\} & N\\
L9a5\{0\} & N\\
L9a6\{1\} & N\\
L9a8\{0\} & N\\
L9a8\{1\} & N\\
L9a9\{0\} & N\\
L9a9\{1\} & N\\
L9a11\{0\} & N\\
L9a12\{1\} & N\\
L9a14\{0\} & N\\
L9a14\{1\} & N\\
L9a16\{0\} & N\\
L9a20\{0\} & N\\
L9a21\{0\} & N\\
L9a22\{0\} & N\\
L9a24\{1\} & N\\
L9a26\{1\} & N\\
L9a27\{0\} & N\\
L9a28\{0\} & N\\
L9a29\{0\} & N\\
L9a31\{0\} & N\\
L9a32\{1\} & N\\
L9a33\{0\} & N\\
L9a36\{0\} & N\\
L9a38\{0\} & N\\
L9a39\{0\} & N\\
\end{tabular}
\begin{tabular}{|cc}
name & strongly QP (SL)\\
\hline\hline
L9a41\{0\} & N\\
L9a42\{0\} & N\\
L9a42\{1\} & N\\
L9a43\{1,0\} & N\\
L9a43\{0,1\} & N\\
L9a43\{1,1\} & N\\
L9a44\{0,0\} & N\\
L9a44\{1,0\} & N\\
L9a44\{0,1\} & N\\
L9a46\{0,0\} & N\\
L9a46\{1,0\} & N\\
L9a46\{0,1\} & N\\
L9a46\{1,1\} & N\\
L9a47\{1,0\} & N\\
L9a47\{0,1\} & N\\
L9a47\{1,1\} & N\\
L9a48\{0,0\} & N\\
L9a49\{1,0\} & N\\
L9a49\{0,1\} & N\\
L9a50\{0,0\} & N\\
L9a50\{1,0\} & N\\
L9a50\{1,1\} & N\\
L9a51\{0,0\} & N\\
L9a51\{1,0\} & N\\
L9a51\{1,1\} & N\\
L9a52\{1,0\} & N\\
L9a52\{1,1\} & N\\
L9a53\{0,0\} & N\\
L9a53\{1,0\} & N\\
L9a53\{0,1\} & N\\
L9a53\{1,1\} & N\\
L9a54\{0,0\} & N\\
L9a54\{1,0\} & N\\
L9a54\{0,1\} & N\\
L9a54\{1,1\} & N\\
L9a55\{0,0,0\} & N\\
L9a55\{0,1,0\} & N\\
L9a55\{0,0,1\} & N\\
L9a55\{1,0,1\} & N\\
 &  \\
  & \\
 &  \\
  & \\
 &  \\
  & \\
\end{tabular}
\caption{Some fibered prime links with 9 or fewer crossings. In the second column we write whether or not the corresponding fibered link (or its mirror image) is strongly quasi-positive. In the latter case, the maximal self-linking number is denoted between brackets.}
\label{tab:small_links}
\end{table}
Clearly, being fibered and strongly quasi-positive does depend on relative orientations. Moreover, a link $L$ is fibered if and only if $L^*$ is fibered; while when in Table \ref{tab:small_links} we write that $L$ is strongly quasi-positive, we mean that at least one between $L$ and $L^*$ is. In fact, the following result holds.
\begin{teo}
 \label{teo:main2}
 Suppose that $L$ is a strongly quasi-positive link in $S^3$. Then its mirror image $L^*$ is also strongly quasi-positive if and only if $L$ is an unlink. 
\end{teo}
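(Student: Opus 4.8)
The plan is to prove the two implications separately; the ``if'' direction is essentially formal, while the substance is the ``only if'' direction, which I would deduce from the behaviour of the $\tau$-set together with \cite[Theorem 1.4]{Cavallo3} and the mirror symmetry already used in the proof of Theorem \ref{teo:main}.

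For the ``if'' direction: an $n$-component unlink bounds $n$ disjoint disks, which is a quasi-positive Seifert surface (a union of disks with no bands attached), so the unlink is strongly quasi-positive; and since a diagram with no crossings is preserved by reflection, the unlink is amphichiral, i.e. $L^*\cong L$, so $L^*$ is strongly quasi-positive as well.

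For the ``only if'' direction, suppose that both $L$ and $L^*$ are strongly quasi-positive. I would first reduce to the non-split case: if $L=L_1\sqcup\dots\sqcup L_k$ is a splitting into non-split sublinks, then each $L_i$ is strongly quasi-positive and, applying the same to $L^*=L_1^*\sqcup\dots\sqcup L_k^*$, so is each $L_i^*$; hence it is enough to prove that each $L_i$ is an unknot, and a disjoint union of unknots is an unlink. So assume $L$ is non-split. Then \cite[Theorem 1.4]{Cavallo3} gives $\tau_{\max}(L)=g_3(L)+n-1$, and, since $L^*$ is non-split and the Seifert genus is a mirror invariant, also $\tau_{\max}(L^*)=g_3(L^*)+n-1=g_3(L)+n-1$. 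Applying the identity $\tau_{\min}(M^*)=-\tau_{\max}(M)$ (the one used in the proof of Theorem \ref{teo:main}) to $M=L^*$ yields
\[\tau_{\min}(L)=-\tau_{\max}(L^*)=-\bigl(g_3(L)+n-1\bigr)\:.\]

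At this point I would invoke the key estimate: for a non-split strongly quasi-positive link one has $\tau_{\min}(L)\geq g_3(L)$, equivalently the $\tau$-set has width $\tau_{\max}(L)-\tau_{\min}(L)\leq n-1$. Combined with the displayed equality this gives $g_3(L)\leq-(g_3(L)+n-1)$, i.e. $2g_3(L)+n\leq 1$; since $g_3(L)\geq 0$ and $n\geq 1$, this forces $n=1$ and $g_3(L)=0$, so $L$ is the unknot, completing the reduction. The main obstacle is precisely this inequality $\tau_{\min}(L)\geq g_3(L)$ for non-split strongly quasi-positive links. I would establish it in the spirit of \cite{Cavallo3}: a non-split strongly quasi-positive link bounds a \emph{connected} quasi-positive Seifert surface (a disconnected Bennequin surface would force the link to be split), and the positivity of this surface, pushed into $B^4$, bounds the Alexander filtration on $\widehat{CFL}$ from below so that the smallest Alexander grading carrying a class of $\widehat{\mathcal{HFL}}(L)$ is at least $g_3(L)$; alternatively one can induct on $n$, using that deleting a component of a strongly quasi-positive link gives a strongly quasi-positive link, that $\tau_{\min}$ drops by at most $1$ under such a deletion, and that the base case is $\tau_{\min}(K)=\tau(K)=g_3(K)\geq 0$ for strongly quasi-positive knots. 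The split-case reduction likewise rests on the (standard) fact that a split component of a strongly quasi-positive link is strongly quasi-positive.
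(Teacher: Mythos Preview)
Your proof is correct and follows essentially the same route as the paper: both combine \cite[Theorem 1.4]{Cavallo3}, the mirror identity $\tau_{\min}(L)=-\tau_{\max}(L^*)$, and the width bound $\tau_{\max}(L)-\tau_{\min}(L)\leq n-1$ (which is exactly the paper's Lemma~\ref{lemma:tau}, established there via the slice-torus property of $\tau_{\max}$ from \cite{CC}) to force the link to be trivial. The only cosmetic differences are that the paper reduces to $o(L)=0$ via Lemma~\ref{lemma:unknots} rather than to non-split sublinks, and phrases the final contradiction in terms of the Thurston norm ($\norm{L}_T\leq -1$) rather than $2g_3(L)+n\leq 1$.
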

Theorem \ref{teo:main2} follows from a more general result of Hayden \cite{Hayden}, but we can prove it directly using only link Floer homology.
Let us start from the following lemma.
\begin{lemma}
 \label{lemma:unknots}
 If $L$ is strongly quasi-positive then the same its true for $L'$, where $L=L'\sqcup\bigcirc_{o(L)}$ and $\bigcirc_{\ell}$ is the $\ell$-unlink. Furthermore, one has $\norm{L}_T=\norm{L'}_T$.
\end{lemma}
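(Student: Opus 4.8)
The plan is to deduce both assertions from one cut-and-paste argument in $S^3$, applied to an arbitrary Seifert surface of $L$ for the Thurston-norm statement and to a quasipositive surface of $L$ for the strong quasi-positivity statement.

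For the Thurston norm I would enclose the $o(L)$ trivial components of $L$ in pairwise disjoint balls $B_1,\dots,B_{o(L)}$, each disjoint from $L'$. Given any Seifert surface $\Sigma$ for $L$ with no closed components, put it in general position with respect to the spheres $\partial B_i$ and remove the intersection circles by innermost-disk surgeries; since $S^3$ is irreducible none of these lowers $\chi$. The result is isotopic to the disjoint union of a Seifert surface for $L'$ with $o(L)$ Seifert surfaces for unknots, so $\chi(\Sigma)$ is at most the maximal Euler characteristic of a Seifert surface of $L'$ plus $o(L)$; the reverse inequality is clear, by capping off $o(L)$ disjoint disks. Hence the two maxima differ by exactly $o(L)$, and as $o(L')=0$ this gives $\norm{L}_T=\norm{L'}_T$.

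For strong quasi-positivity I would invoke Rudolph's characterization that a link is strongly quasi-positive if and only if it bounds a quasipositive surface, together with his theorem that such a surface has maximal Euler characteristic among the Seifert surfaces of its boundary (the equality $g_3=g_4$ for strongly quasi-positive links being also a consequence of $\tau(L)=g_3(L)+n-1$). Thus I may pick a quasipositive surface $F$ for $L$ that is incompressible and has no closed components, and run the surgery argument above on $F$. Because $F$ is incompressible and $S^3$ irreducible, each innermost-disk surgery along $\partial B_i$ is realized by an ambient isotopy, after which $F$ decomposes as $F'\sqcup D_1\sqcup\dots\sqcup D_{o(L)}$, with $F'$ a Seifert surface for $L'$ and each $D_i$ a disk component of $F$ bounding a trivial component of $L$ (being a maximal-$\chi$ Seifert surface of an unknot contained in $B_i$). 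Since deleting a disk component of a quasipositive surface again yields a quasipositive surface --- in the braid picture one simply deletes the corresponding free strand of a strongly quasi-positive braid presenting $L$ --- the surface $F'$ is quasipositive, so $L'$ is strongly quasi-positive.

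I expect the one delicate step to be upgrading the cut-and-paste on $F$ to an ambient isotopy: this needs the quasipositive surface $F$ to be incompressible, which follows from the maximality of its Euler characteristic, and then the usual innermost-circle bookkeeping must be checked (no sphere components are created, and innermost circles bound disks both on $\partial B_i$ and on $F$). The remaining ingredients --- that a disk component of a Seifert surface cuts off an unknot contained in a ball disjoint from the rest of the link, and that $o(L')=0$ --- are routine.
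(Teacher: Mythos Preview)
Your proposal is correct and follows essentially the same strategy as the paper: both arguments use that a quasi-positive surface $\Sigma$ for $L$ realizes the maximal Euler characteristic, deduce that $\Sigma$ splits as $\Sigma'\sqcup D_1\sqcup\dots\sqcup D_{o(L)}$ with $\partial\Sigma'=L'$, and conclude that $L'$ bounds the quasi-positive surface $\Sigma'$. The only difference is cosmetic: the paper invokes additivity of $\norm{\cdot}_T$ and the splitting of $\Sigma$ as known facts, whereas you spell out the innermost-disk/incompressibility argument that underlies them.
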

\begin{proof}
 The link $L$ bounds a quasi-positive surface $\Sigma$. It is known that quasi-positive surfaces always minimize the Thurston norm; hence, since one has $\norm{L}_T=\norm{L'}_T$ from the additivity of $\norm{\cdot}_T$, we have that $\Sigma$ is the disjoint union of $\Sigma'$ with some disks, where $\partial \Sigma'=L'$. This implies that $L'$ is strongly quasi-positive.
\end{proof}
At this point, in order to prove Theorem \ref{teo:main2} we only need Lemma \ref{lemma:tau}, which solves Exercise 8.4.9 (a) in \cite{book}. 
\begin{lemma}
 \label{lemma:tau}
 For every $n$-component link $L$ we have \[0\leq\tau_{\max}(L)-\tau_{\min}(L)\leq n-1\:.\] 
\end{lemma}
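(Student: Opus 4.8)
The lower bound is immediate: $\tau_{\min}(L)$ and $\tau_{\max}(L)$ are by construction the smallest and the largest element of one and the same set of $2^{n-1}$ integers.

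For the upper bound, the plan is to translate the statement into a property of the Alexander filtration on $\widehat{\mathcal{HFL}}_*(L)$. By \cite{Cavallo} the whole $\tau$-set of $L$ is encoded in this filtration, its elements being the $2^{n-1}$ jump levels counted with multiplicity; in particular $\tau_{\min}(L)$ is the least $s\in\Z$ with $\mathcal A^s\widehat{\mathcal{HFL}}_*(L)\neq\{0\}$ and, by the same token (equivalently, via the mirror symmetry $\tau_{\max}(L)=-\tau_{\min}(L^*)$ already used in the proof of Theorem \ref{teo:main}), $\tau_{\max}(L)$ is the least $s$ with $\mathcal A^s\widehat{\mathcal{HFL}}_*(L)=\widehat{\mathcal{HFL}}_*(L)$. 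Thus $\tau_{\max}(L)-\tau_{\min}(L)$ is exactly the length of the smallest interval of Alexander gradings carried by the $E_\infty$-page of the Alexander spectral sequence of $\widehat{CFL}(\mathcal D_L)$, that is, by the graded object associated to the induced filtration on $\widehat{\mathcal{HFL}}_*(L)$; the goal becomes to show that this length is at most $n-1$.

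I would deduce this from the comparison: the Alexander width of that $E_\infty$-page is at most its Maslov width. The Maslov width is easy, since $\widehat{\mathcal{HFL}}_*(L)\cong\widehat{HF}_*(S^3)\otimes(\F_{-1}\oplus\F_0)^{\otimes(n-1)}$ as Maslov-graded groups and passing to the associated graded does not change the Maslov grading: the $E_\infty$-page is supported exactly in the $n$ consecutive Maslov gradings $0,-1,\dots,-(n-1)$, with one-dimensional top and bottom, so its Maslov width is $n-1$. For the Alexander width I would run an induction on $n$ that forgets, one at a time, the $n-1$ extra basepoint pairs of $\mathcal D_L$: forgetting one such pair ought to replace $\widehat{CFL}(\mathcal D_L)$, up to filtered chain homotopy equivalence, by the link Floer complex of the link with one component deleted, tensored with a two-term complex whose two generators differ by $1$ in the Maslov grading and by at most $1$ in the collapsed Alexander grading; each step then enlarges the Maslov width by exactly $1$ and the Alexander width by at most $1$. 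Starting from $n=1$, where $\widehat{\mathcal{HFL}}_*(L)$ has rank one and both widths vanish, this yields the bound; it also shows the bound is sharp, the $n$-component unlink having $\tau$-set $\{0,-1,\dots,-(n-1)\}$.

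The step I expect to be the main obstacle is precisely the filtered input behind the induction: the free (de)stabilization identifications, classically responsible for the fact that $\widehat{CF}$ depends only on the number of basepoints, must be made compatible with the Alexander filtration up to a shift of $1$, so that a discarded basepoint pair can widen the Alexander support by at most $1$. If that becomes too delicate, the fallback is an induction carried out on the links themselves: choose an oriented band merging two components of $L$ into one, producing an $(n-1)$-component link $L'$, and use genus-zero cobordism estimates for the extremes of the $\tau$-set (in the spirit of \cite{Cavallo,Cavallo3}) to show that $\tau_{\min}(L)\geq\tau_{\min}(L')-1$ and $\tau_{\max}(L)\leq\tau_{\max}(L')$ (or with the two bounds interchanged), so that adding a component enlarges the width of the $\tau$-set by at most $1$.
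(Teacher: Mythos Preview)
Your main line of attack has a genuine gap. For an $n$-component link the Heegaard diagram $\mathcal D_L$ carries exactly $n$ basepoint pairs, one per component, and none of them is ``extra'': you cannot forget a pair $(w_i,z_i)$ and land on a link Floer diagram for the sublink with that component deleted. The tensor identification you are invoking, $\widehat{CF}(\mathcal D_L)\simeq\widehat{CF}(S^3)\otimes V^{\otimes(n-1)}$, is an \emph{unfiltered} statement about the multi-pointed hat complex of $S^3$; it says nothing about how the Alexander filtration, which is encoded by the $z$-basepoints and hence by the actual link, behaves under such a removal. What one really has when one forgets a $z_i$ is a spectral sequence converging to $\widehat{HFL}$ of the sublink, not a filtered tensor decomposition, and the Alexander shifts involved depend on the linking data rather than on a uniform $\pm 1$. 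So the inductive step does not go through, and the inequality ``Alexander width $\leq$ Maslov width'' on the $E_\infty$-page remains unproved.

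Your fallback is much closer to what is actually needed, but it is left as a sketch: the precise inequalities for $\tau_{\max}$ and $\tau_{\min}$ under a single merging band have to be established, not asserted, and you already hedge on which direction they go. In fact, once you prove that $\tau_{\max}$ behaves correctly under genus-zero cobordisms and split unions, you are essentially verifying that $\tau_{\max}$ is a \emph{slice-torus} link invariant in the sense of \cite{CC}, and that is exactly the route the paper takes: use the mirror symmetry $\tau_{\min}(L)=-\tau_{\max}(L^*)$, apply \cite[Lemma 2.1]{CC} together with the identification $\tau_{\max}=\tau$ on torus links from \cite{Cavallo3} to conclude that $\tau_{\max}$ is slice-torus, and then read off $\tau_{\max}(L)+\tau_{\max}(L^*)\leq n-1$ from the general theory. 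I would drop the basepoint-removal argument entirely and either carry out the cobordism inequalities carefully or cite the slice-torus package directly.
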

\begin{proof}
 It follows from \cite{CC} because $\tau_{\min}(L)=-\tau_{\max}(L^*)$, for the symmetries of $cHFL^-(L)$, and $\tau_{\max}(L)$ is a slice-torus concordance invariant. To show this we can apply \cite[Lemma 2.1]{CC}; then the only non-trivial fact to prove is that $\tau_{\max}$ coincides with $\tau$ for torus links, but this follows from \cite[Theorem 1.4]{Cavallo3}.
\end{proof}
In particular, this result implies that $\tau(L)+\tau(L^*)\leq n-1$.
\begin{proof}[Proof of Theorem \ref{teo:main2}]
 From Lemma \ref{lemma:unknots} we can suppose that $o(L)=0$. By using \cite[Theorem 1.4]{Cavallo3} again, the fact that $L$ and $L^*$ are both strongly quasi-positive gives $2\tau(L)-n=2\tau(L^*)-n$, since $\norm{L}_T=\norm{L^*}_T$. Therefore, from Lemma \ref{lemma:tau} one has
 \[2\tau(L)\leq n-1\quad\text{ and then }\quad\norm{L}_T\leq-1\:,\] but this is impossible.
\end{proof}
The following easier criterion can be used for alternating links. We recall that fibered links are always non-split and we denote by $\sigma(L)$ the signature and by $\nabla_L$ the Conway polynomial of $L$.
\begin{prop}
 A quasi-alternating $n$-component link is fibered and strongly quasi-positive if and only if $\nabla_L(z)=z^{-\sigma(L)}+f(z)$, where $f(z)$ has degree strictly smaller than $-\sigma(L)$.
\end{prop}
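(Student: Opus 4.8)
The plan is to combine Theorem~\ref{teo:main} with two classical inputs: the thinness of the link Floer homology of an alternating link, which forces $\tau_{\max}$ to depend only on the signature, and the monicity of the Alexander polynomial of a fibered link. I first note that the proof of Theorem~\ref{teo:main} establishes more than its statement: for a fibered link $L$ it gives the equivalence
\[L\text{ is strongly quasi-positive}\iff\tau_{\max}(L)=g_3(L)+n-1,\]
the forward implication being \cite[Theorem~1.4]{Cavallo3} and the reverse being precisely the computation in the second part of that proof, whose running hypothesis is $\tau_{\max}(L)=g$. So it suffices to prove that, for an alternating fibered link $L$, the equality $\tau_{\max}(L)=g_3(L)+n-1$ is equivalent to the stated condition on $\nabla_L$.

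Next, since $L$ is fibered, by Proposition~\ref{prop:fibered} its fiber is a connected genus-minimizing surface, so the symmetrized Alexander polynomial of $L$ --- being the characteristic polynomial of the monodromy acting on the first homology of the fiber --- is monic of breadth $2g_3(L)+n-1$. Equivalently, $\nabla_L(z)=\pm z^{d_0}+(\text{terms of degree}<d_0)$ with $d_0:=2g_3(L)+n-1$. Consequently the condition ``$\nabla_L(z)=\pm z^{-\sigma(L)}+f(z)$ with $\deg f<-\sigma(L)$'' holds if and only if $-\sigma(L)=d_0=2g_3(L)+n-1$: the leading coefficient $\pm1$ is then automatic from monicity, while if $-\sigma(L)\neq d_0$ no translate of $\nabla_L$ has the prescribed shape.

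The heart of the matter is the third step. A fibered link is non-split, so $L$ is a non-split alternating link and $\widehat{HFL}_{*,*}(L)$ is thin: it is supported on a single diagonal of the (Maslov, Alexander) plane whose position is determined by $\sigma(L)$, the link analogue of Ozsv\'ath--Szab\'o's computation for alternating knots and the homological content underlying \cite{BBG}. Combining this with the Euler characteristic identity $\sum_s\chi(\widehat{HFL}_{*,s}(L))\,t^s=\pm(t^{1/2}-t^{-1/2})^{n-1}\Delta_L(t)$ and the rank-one condition at $s=s_{\text{top}}$ from Theorem~\ref{teo:Ghiggini-Ni}, the filtration induced on $\widehat{\mathcal{HFL}}(L)$ is pinned down, and one extracts $\tau_{\max}(L)=\tfrac12\big(n-1-\sigma(L)-\eta(L)\big)$, where $\eta(L)$ is the nullity. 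Feeding in the elementary signature inequality $-\sigma(L)\leq 2g_3(L)+n-1-\eta(L)$ (the rank of the Seifert form of a genus-minimizing surface), the equation $\tau_{\max}(L)=g_3(L)+n-1$ forces $\eta(L)=0$ and reduces exactly to $-\sigma(L)=2g_3(L)+n-1=d_0$. The hard part is precisely this extraction of $\tau_{\max}$: unlike for a knot, a thin link complex can carry several generators in one bidegree, so one must follow the unique surviving differential on $\widehat{HFL}_{*,*}(L)$ to determine at which Alexander level the classes of $\widehat{\mathcal{HFL}}(L)$ are represented.

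Assembling the three steps gives the asserted equivalence: $L$ is strongly quasi-positive $\iff\tau_{\max}(L)=g_3(L)+n-1\iff -\sigma(L)=2g_3(L)+n-1\iff\nabla_L(z)=\pm z^{-\sigma(L)}+f(z)$ with $\deg f<-\sigma(L)$.
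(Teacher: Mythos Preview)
Your approach is correct and matches the paper's: both combine Theorem~\ref{teo:main}, the $\widehat{HFL}$-thinness of non-split alternating links (which pins $\tau$ to the signature), and the categorification of $(t^{1/2}-t^{-1/2})^{n-1}\nabla_L(t^{1/2}-t^{-1/2})$ by $\widehat{HFL}$ together with Theorem~\ref{teo:Ghiggini-Ni}. The paper's proof is a one-line citation of these same ingredients and then records $2\tau(L)=n-1-\sigma(L)$ directly for non-split alternating links, so your nullity detour, while not wrong, is unnecessary.
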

\begin{proof}
It follows from Theorem \ref{teo:Ghiggini-Ni} and the facts that $\widehat{HFL}(L)$ categorifies the polynomial $\left(t^{\frac{1}{2}}-t^{-\frac{1}{2}}\right)^{n-1}\cdot\nabla_L\left(t^{\frac{1}{2}}-t^{-\frac{1}{2}}\right)$ and quasi-alternating links are $\widehat{HFL}$-thin, see \cite{book}.
\end{proof}
This result agrees with \cite{BBG}, where is shown that, for a non-split alternating link $L$, being strongly quasi-positive is equivalent to being positive and this happens if and only if $2g_3(L)=1-n-\sigma(L)$. In fact, we also recall that for such a link $L$ one has $2\tau(L)=n-1-\sigma(L)$.


\begin{thebibliography}{9} 
 \bibitem{BBG} M. Boileau, S. Boyer and C. Gordon, \emph{Branched covers of quasipositive links and $L$-spaces}, J. of Topology, \textbf{12} (2019), no. 2, pp. 536--576.
 \bibitem{Cavallo} A. Cavallo, \emph{The concordance invariant tau in link grid homology}, Algebr. Geom. Topol., \textbf{18} (2018), no. 4, pp. 1917--1951. 
 \bibitem{Cavallo3} A. Cavallo, \emph{On Bennequin-type inequalities for links in tight contact 3-manifolds}, J. Knot Theory Ramifications, \textbf{29} (2020), no. 8, 2050055.
 \bibitem{Cavallo2} A. Cavallo, \emph{An invariant of Legendrian and transverse links from open book decompositions of contact 3-manifolds}, Glasgow Math. J., \textbf{63} (2021), no. 2, pp. 451--483.
 \bibitem{CC} A. Cavallo and C. Collari, \emph{Slice-torus concordance invariants and Whitehead doubles of links}, Canad. J. Math., \textbf{72} (2020), no. 6, pp. 1423--1462.   
 \bibitem{EvHM} J. Etnyre and J. Van Horn-Morris, \emph{Fibered transverse knots and the Bennequin bound},
         Int. Math. Res. Not. IMRN, (2011), no. 7, pp. 1483--1509.
 \bibitem{Ghiggini} P. Ghiggini, \emph{Knot Floer homology detects genus-one fibred knots}, Amer. J. Math., \textbf{130} (2008), no. 5, pp. 1151--1169. 
 \bibitem{Giroux} E. Giroux, \emph{G\'eom\'etrie de contact: de la dimension trois vers les dimensions sup\'erieures}, Proceedings of the International Congress of Mathematicians, Vol. II, Higher Ed.  Press, Beijing, 2002, pp. 405--414.
 \bibitem{Harer} J. Harer, \emph{How to construct all fibered knots and links}, Topology, \textbf{21} (1982), no. 3, pp. 263--280.
 \bibitem{Hayden} K. Hayden, \emph{Minimal braid representatives of quasipositive links},
                  Pacific J. Math., \textbf{295} (2018), no. 2, pp. 421--427.
 \bibitem{Hedden} M. Hedden, \emph{Notions of positivity and the Ozsv\'ath-Szab\'o concordance invariant}, J. Knot Theory Ramifications, \textbf{19} (2010), no. 5, pp. 617--629.                  
 \bibitem{HKM} K. Honda, W. Kazez and G. Mati\'c, \emph{On the contact class in Heegaard Floer homology}, J. Differential Geom., \textbf{83} (2009), no. 2, pp. 289--311.
 \bibitem{Kawauchi} A. Kawauchi, \emph{A survey of knot theory}, Birkh\"auser Verlag, Basel, 1996, pp. xxii and 420.
 \bibitem{Linkinfo} C. Livingston and A. H. Moore, \emph{KnotInfo: Table of Link Invariants}, \url{http://www.indiana.edu/~knotinfo}, December 27, 2019.
 \bibitem{Ni} Y. Ni, \emph{Knot Floer homology detects fibred knots},  Invent. Math., \textbf{170} (2007), no. 3, pp. 577--608.
 \bibitem{Ninorm} Y. Ni, \emph{Link Floer homology detects the Thurston norm}, Geom. Topol., \textbf{13} (2009), no. 5, pp. 2991--3019. 
 \bibitem{book} P. Ozsv\'ath, A. Stipsicz and Z. Szab\'o, \emph{Grid homology for knots and links}, AMS, volume 208 of Mathematical Surveys and Monographs, 2015.
 \bibitem{OS} P. Ozsv\'ath and Z. Szab\'o, \emph{Holomorphic disks and topological invariants of closed three-manifolds},
                Ann. of Math. (2), \textbf{159} (2004), no. 3, pp. 1159--1245.
 \bibitem{OSz} P. Ozsv\'ath and Z. Szab\'o, \emph{Heegaard Floer homology and contact structures}, Duke Math. J., \textbf{129} (2005), no. 1, pp. 39--61.
 \bibitem{OSz2} P. Ozsv\'ath and Z. Szab\'o, \emph{Heegaard diagrams and Floer homology},
 International Congress of Mathematicians Vol. II, pp. 1083--1099, Eur. Math. Soc., Z\"urich, 2006.
 \bibitem{OSlinks} P. Ozsv\'ath and Z. Szab\'o, \emph{Holomorphic disks, link invariants and the multi-variable Alexander polynomial},
                Algebr. Geom. Topol., \textbf 8 (2008), no. 2, pp. 615--692.
 \bibitem{Stallings} J. Stallings, \emph{Constructions of fibred knots and links}, Algebraic and geometric topology, Part 2, pp. 55--60, Proc. Sympos. Pure Math., XXXII, Amer. Math. Soc., Providence, R.I., 1978.
 \bibitem{Thurston} W. Thurston, \emph{A norm for the homology of 3-manifolds}, Mem. Amer. Math. Soc., \textbf{59} (1986), no. 339, pp. i-vi and 99--130.
\end{thebibliography}
\end{document}